\newcommand{\sgn}{\operatorname{sgn}}
\newcommand{\vague}{\stackrel{v}{\to}}
\newcommand{\eid}{\stackrel{d}{=}}
\newcommand{\one}{{\bf 1}}
\newcommand{\reals}{{\mathbb R}}
\newcommand{\bbr}{\reals}
\newcommand{\vep}{\varepsilon}
\newcommand{\bbrdcomp}{\overline{\bbr^d}}
\newtheorem{theorem}{Theorem}[section]
\newtheorem{lemma}[theorem]{Lemma}
\theoremstyle{definition}
\theoremstyle{remark}
\newtheorem{remark}[theorem]{Remark}
\numberwithin{equation}{section}
\newcommand{\rr}{{\mathbb R}}
\newcommand{\rd}{{\mathbb R^d}}
\newcommand{\ip}[2]{{\langle #1,#2\rangle}}
\begin{document}
\sloppy
\title[Tempered stable laws as random walk limits]{Tempered stable laws as random walk limits}
\footnote{{\it 2010 Mathematics Subject Classification.} Primary: 60F05; 60E07.  }

\author{Arijit Chakrabarty}
\address{Arijit Chakrabarty (corresponding author), Statistics and Mathematics Unit, Indian Statistical Institute\\7 S.J.S. Sansanwal Marg, New Delhi 110016, INDIA\\
Phone number: +91-11-41493918}
\email{arijit@isid.ac.in}

\author{Mark M. Meerschaert}
\address{Mark M. Meerschaert, Department of Statistics and Probability, Michigan State University, East Lansing, MI 48824}
\email{mcubed\@@{}stt.msu.edu}
\urladdr{http://www.stt.msu.edu/$\sim$mcubed/}


\begin{abstract}
Stable laws can be tempered by modifying the L\'evy measure to cool the probability of large jumps.  Tempered stable laws retain their signature power law behavior at infinity, and infinite divisibility.  This paper develops random walk models that converge to a tempered stable law under a triangular array scheme.  Since tempered stable laws and processes are useful in statistical physics, these random walk models can provide a basic physical model for the underlying physical phenomena.
\end{abstract}

\keywords{Random walk, tempered stable law, triangular array, infinitely divisible law.}


\thanks{AC was partially supported by the Centenary Postdoctoral Fellowship at the Indian Institute of Science. MMM was partially supported by NSF grants DMS-0125486, DMS-0803360, EAR-0823965 and NIH grant R01-EB012079-01.}

\maketitle

\baselineskip=18pt

\section{Introduction}
Tempered stable laws were introduced in physics as a model for turbulent velocity fluctuations (\cite{Koponen:1995, Novikov:1994}).  They have also been used in finance (\cite{carr:geman:madan:yor:2002, carr:geman:madan:yor:2003}) and hydrology (\cite{meerschaert:zhang:baeumer:2008}) as a model of transient anomalous diffusion (\cite{baeumer:meerschaert:2010}).   The general class of tempered stable distributions for random vectors was developed by \cite{Rosinski:2007}.  In short, the L\'evy measure of a stable law is modified in the tail to reduce the probability of large jumps.  Often this is done in such a way that all moments exist, but tempering by a power law of higher order is also useful (\cite{sokolov:chechkin:klafter:2004}).  This paper develops random walk models that converge to a tempered stable law.  Starting with a random walk in the domain of attraction of a stable law, the basic idea is to modify the tails of the jumps to mimic the tempering function of the limit.  A triangular array scheme is essential, since the limit is no longer stable.  The results of this paper are intended to form a useful random walk model for natural processes that are well described by a tempered stable. The main result of this paper is Theorem \ref{t1}, which shows that the weak limit of the row sum of that triangular array is a tempered stable distribution. In Theorem \ref{t4}, we show that the random walk process converges to the L\'evy process generated by the tempered stable distribution in the sense of finite-dimensional distributions.

Section \ref{sec2} gives a brief background of stable distributions and their domains of attraction. In Section \ref{sec3}, we define  tempered stable distributions and the triangular array model. In Section \ref{sec4}, we state and prove the results regarding the convergence of the random walk to the tempered stable distribution.

\section{Stable limits for random walks}\label{sec2}
Recall that a random vector $X$ on $\rd$ is infinitely divisible if and only if its characteristic function $E[e^{i\ip \lambda X}]=e^{\psi(\lambda)}$, where
\begin{equation}\label{LevyRepn1}
\psi(\lambda)=i\ip{a}\lambda-\tfrac12 \ip{\lambda}{Q\lambda}+\int_{x\neq 0}\Bigl(e^{i\ip{\lambda}x}-1-
i\ip{\lambda}{x}\one(\|x\|\le 1)\Bigr)M(dx) ,
\end{equation}
where $a\in \rd$, $Q$ is a nonnegative definite symmetric $d\times d$ matrix with entries in $\rr$, and $M$ is a $\sigma$-finite Borel measure on $\rd\setminus\{0\}$ such that $\int_{x\neq 0}\min\{1,\|x\|^2\}M(dx)<\infty$.
The triple $[a,Q,M]$ is called the L\'evy representation, and it is unique \cite[Theorem 3.1.11]{meerschaert:scheffler:2001}. The measure $M$ is known as the L\'evy measure of  $X$.

A $\bbr^d$ valued random vector $X$ is said to be stable if and only if for all $n\ge1$, there exist $b_n>0$ and $a_n\in\bbr^d$ so that
$X_1+\ldots+X_n\eid b_nX+a_n$,
where $X_1,X_2,\ldots$ are i.i.d. copies of $X$. Clearly, a stable random vector is infinitely divisible. It is well known that given a stable random vector, either it is Gaussian, or the Gaussian part is completely absent, {\it i.e.}, in the L\'evy representation \eqref{LevyRepn1}, $Q=0$. In this paper, ``stable random vector'' will refer to the latter case, {\it i.e.}, non-Gaussian stable random vectors. For such a random vector $X$, $P(\|X\|>\cdot)$ regularly varies with index $-\alpha$ for some $0<\alpha<2$. Sometimes, $X$ is also referred to as an $\alpha$-stable random vector. The L\'evy representation of the random vector $X$ is $[a,0,M_0]$ for some $a\in\rd$ where
$M_0(dr,ds)=r^{-\alpha-1}dr\sigma(ds)$,
and $\sigma$ is a finite non-zero Borel measure on the unit sphere $S^{d-1}=\{x\in\rd:\|x\|=1\}$,
see for example \cite[Theorem 7.3.16]{meerschaert:scheffler:2001}. The measure $\sigma$ is known as the spectral measure of $X$. For more details on stable distributions, the reader is referred to \cite{samorodnitsky:taqqu:1994}.

The domain of attraction of an $\alpha$-stable random vector $X$ consists of $\bbr^d$ valued random vectors $H$ such that there exist a sequence of positive numbers $(b_n)$ and a sequence $(a_n)$ in $\bbr^d$ satisfying
\begin{equation}\label{doadef}
b_n^{-1}\left(H_1+\cdots+H_n\right)-a_n\Rightarrow X
\end{equation}
as $n\to\infty$, where $\Rightarrow$ denotes weak convergence and $H_1,H_2,\ldots$ are i.i.d. copies of $H$.   A necessary and sufficient condition for \eqref{doadef} is that $V(r)=P(\|H\|>r)$ varies regularly with index $-\alpha$, and
\begin{equation}\label{TailBalDOAd}
P\bigl(\tfrac {H}{\|H\|}\in D\bigl|
\|H\|>r\bigr)=\frac{P(\|H\|>r,\frac{H}{\|H\|} \in D)}{V(r)}\to
\frac{\sigma(D)}{\sigma(S^{d-1})}
\end{equation}
as $r\to\infty$ for all Borel subsets $D$ of $S^{d-1}$
such that $\sigma(\partial
D)=0$, see for example \cite[Theorem 7.3.16]{meerschaert:scheffler:2001}.  When $\alpha>1$, $m=E(H)$ exists, and we can center to zero expectation in \eqref{doadef} by setting $a_n=nb_n^{-1} m$.  Then the limit $X$ also has zero mean, and its log-characteristic function
\begin{equation}\label{LevyRepn2}
\psi(\lambda)=\int_{x\neq 0}\Bigl(e^{i\ip{\lambda}x}-1-
i\ip{\lambda}{x}\Bigr)M(dx) .
\end{equation}
When $0<\alpha<1$, no centering is required:  Set $a_n=0$ in \eqref{doadef}; then $X$ is a centered stable law with log-characteristic function
\begin{equation}\label{LevyRepn3}
\psi(\lambda)=\int_{x\neq 0}\Bigl(e^{i\ip{\lambda}x}-1\Bigr)M(dx) .
\end{equation}
See for example \cite[Theorem 8.2.16]{meerschaert:scheffler:2001}.

Suppose that $X$ is a stable random vector.
Let $\{X(t)\}$ denote the L\'evy process associated with $X$, so that $X(0)=0$ almost surely, $X(t)$ has stationary, independent increments, and $X(1)=X$ in distribution.  Suppose that \eqref{doadef} holds with $a_n=0$, let $b(c)=b_{\lceil c\rceil}$, and $S(c)=\sum_{j=1}^{\lceil c\rceil}H_j$ for $c\ge0$.  Then, as $c\to\infty$, we also have process convergence $\{b(c)^{-1}S(ct)\}_{t\ge0}\Rightarrow \{X(t)\}_{t\ge0}$ in the sense of finite dimensional distributions \cite[Example 11.2.18]{meerschaert:scheffler:2001} as well as convergence in the Skorokhod space $D([0,\infty),\rd)$ of right continuous functions with left hand limits, in the $J_1$ topology \cite[Theorem 4.1]{meerschaert:scheffler:2004}.  The random vectors $X(t)$ have smooth density functions $P(x,t)$ that solve a fractional differential equation
$\frac{\partial}{\partial t} P(x,t)={\mathcal D}\nabla^\alpha_\sigma P(x,t)$
for anomalous diffusion (\cite{meerschaert:benson:baeumer:1999}).  The fractional derivative operator $\nabla^\alpha_M f(x)$ is defined, for suitable functions $f(x)$ with Fourier transform $\hat f(\lambda)=\int e^{i\ip \lambda x} f(x)\,dx$, as the inverse Fourier transform of $\int_{\|s\|=1} (-i\ip \lambda s)^\alpha \sigma(ds) \hat f(\lambda)$, and ${\mathcal D}>0$ is a positive constant that depends on choice of the norming sequence $b(c)$. The random walk $S_n$ provides a physical model for particle jumps, whose ensemble behavior is approximated by the stable density functions $P(x,t)$.  For example, the random walk can be simulated to solve the fractional diffusion equation, a numerical method known as particle tracking (\cite{zhang:benson:meerschaert:labolle:scheffler:2006}).  The purpose of this paper is to develop analogous random walk models for tempered stables.

\section{Tempered random walks}\label{sec3}
This section develops random walk models that converge to a tempered stable, using a triangular array scheme.  As in \cite{Rosinski:2007}, we define a $d$-dimensional proper tempered $\alpha$-stable random vector to be an infinitely divisible random vector with L\'evy representation $[a,0,M]$ with
\begin{equation}\label{defM}
M(dr,ds)=r^{-\alpha-1}q(r,s)dr\sigma(ds)
\end{equation}
for $r>0$ and $s\in S^{d-1}$. Here $\alpha\in(0,2)$, $\sigma$ is a finite Borel measure on the unit sphere $S^{d-1}=\{x\in\rd:\|x\|=1\}$, and $q:(0,\infty)\times S^{d-1}$ is a Borel measurable function such that for all $s\in S^{d-1}$, $q(\cdot,s)$ is non-increasing, $q(0+,s)=\alpha$ and $q(\infty,s)=0$. We also assume that $q$ is continuous in the second variable, {\it i.e.}, $q(r,\cdot)$ is continuous for all $r>0$.
In \cite{Rosinski:2007}, the assumption is that $q(0+,s)=1$. However, a simple reparametrization yields $q(0+,s)=\alpha$.  It is also assumed in \cite{Rosinski:2007} that $q(\cdot,s)$ is completely monotone, but we do not need that assumption in this paper.  Note that tempered stable random vectors are full dimensional, since the L\'evy measure is not concentrated on any lower dimensional subspace \cite[Proposition 3.1.20]{meerschaert:scheffler:2001}.
Let $H$ be a random vector in the domain of attraction of $X$ such that (without loss of generality) $P(H=0)=0$.
We will define a random walk that approximates the tempered stable using a conditional tempering of the jumps.  Define a function $\pi:(0,\infty)\times S^{d-1}\to\bbr$ by
\begin{equation}\label{pidef}
\pi(u,s)=u^\alpha\int_u^\infty r^{-\alpha-1}q(r,s)dr\,.
\end{equation}
From the fact that the function $q$ is bounded above by $\alpha$, it is immediate that the integral on the right hand side is finite. Clearly,
\begin{equation*}\begin{split}\label{eq3}
\frac{\partial\pi(u,s)}{\partial u}&=\alpha u^{\alpha-1}\int_u^\infty r^{-\alpha-1}q(r,s)dr-u^{-1}q(u,s)\\
&=\alpha u^{\alpha-1}\int_u^\infty r^{-\alpha-1}\left\{q(r,s)-q(u,s)\right\}dr\nonumber\le 0 ,
\end{split}\end{equation*}
the inequality following from the fact that $q(\cdot,s)$ is non-increasing. Thus $\pi(\cdot,s)$ is also non-increasing. A simple application of the L'H\^{o}pital's rule yields that $\pi(0+,s)=1$ and $\pi(\infty,s)=0$.

Define a family of probability measures on $(0,\infty)$ by
$\Pi(du,s)=-\frac{\partial}{\partial u}\pi(u,s)du$.
Since $q$ is a measurable function, for every Borel set $A\subset(0,\infty)$, $\Pi(A,\cdot)$ is a measurable function from $S^{d-1}$ to $\bbr$. Hence there exists a random variable $T>0$ whose conditional distribution given $H$ is $\Pi\left(\cdot,H/{\|H\|}\right)$.

Now we construct the tempered random walk.  For $t>0$, define
\begin{equation}\label{Htdef}
H^t=\frac{H}{\|H\|} (\|H\| \wedge t T)\,.
\end{equation}
Let $\{(H_j:T_j):j\ge1\}$ be i.i.d.\ copies of $(H,T)$.  Suppose $v_n\to\infty$ is a sequence of positive numbers. Define $Y_{nj}:=\left(\|H_j\|\wedge v_nT_j\right){H_j}/{\|H_j\|}$ for $n,j\ge1$, and let
\begin{equation}\label{TRWdef}
S_n(k):= Y_{n1}+\cdots+Y_{nk} .
\end{equation}
The next section shows that, for suitably chosen truncation thresholds $v_n$, the random walk \eqref{TRWdef} is asymptotically tempered stable.

\section{Limits of tempered random walks}\label{sec4}
The results in this section show that for suitable truncation thresholds $v_n$ and centering, the random walk \eqref{TRWdef} converges to a tempered stable.  The form of the centering is then related to the case with no tempering.  We begin with a few technical results.  Recall that for sigma-finite Borel measures $\mu_n,\mu$ on $\Gamma=\bbrdcomp\setminus\{0\}$, $\mu_n\vague\mu$ (vague convergence) means that $\mu_n(D)\to\mu(D)$ for Borel sets $D\subset\Gamma$ that are bounded away from zero, for which $\mu(\partial D)=0$.

\begin{lemma}\label{l1} Suppose that $H^t$ is defined by \eqref{Htdef}. Then
$$
\frac{P(t^{-1}H^t\in\cdot)}{P(\|H\|>t)}\vague\frac1{\sigma(S^{d-1})}M(\cdot)
$$
on $\Gamma$ as $t\to\infty$, where $M$ is as in \eqref{defM}.
\end{lemma}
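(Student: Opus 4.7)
The plan is to verify vague convergence by checking it on the class of polar rectangles $D_{u,A} := \{x \in \rd : \|x\| > u,\ x/\|x\| \in A\}$ for $u > 0$ and $A \subseteq S^{d-1}$ a $\sigma$-continuity set; this class is convergence-determining for vague convergence on $\Gamma$, and for such sets $M(\partial D_{u,A}) = 0$ since $M$ is absolutely continuous in the radial variable and $\sigma(\partial A) = 0$. Setting $R := \|H\|$ and $S := H/\|H\|$, the key structural fact I would use is that the construction of $T$ makes $T$ conditionally independent of $R$ given $S$, with conditional survival function $P(T > u \mid S) = \pi(u, S)$.

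Using $t^{-1}H^t = S(t^{-1}R \wedge T)$, the event $\{t^{-1}H^t \in D_{u,A}\}$ coincides with $\{R > tu,\ T > u,\ S \in A\}$, so conditioning on $S$ gives
\begin{equation*}
\frac{P(t^{-1}H^t \in D_{u,A})}{V(t)} = \frac{V(tu)}{V(t)}\,\Exp[\pi(u, S)\one(S \in A) \mid R > tu],
\end{equation*}
where $V(r) := P(R > r)$. I would then take $t \to \infty$: the first factor tends to $u^{-\alpha}$ by regular variation; the conditional expectation is handled using the tail-balance hypothesis \eqref{TailBalDOAd} (which states that the conditional law of $S$ given $\{R > r\}$ converges weakly to $\sigma/\sigma(S^{d-1})$ on $S^{d-1}$), combined with Portmanteau applied to the bounded integrand $\pi(u, \cdot)\one_A$. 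This yields
\begin{equation*}
\frac{P(t^{-1}H^t \in D_{u,A})}{V(t)} \longrightarrow \frac{u^{-\alpha}}{\sigma(S^{d-1})}\int_A \pi(u,s)\,\sigma(ds) = \frac{M(D_{u,A})}{\sigma(S^{d-1})},
\end{equation*}
the last equality using \eqref{defM} and \eqref{pidef} to rewrite $\int_u^\infty r^{-\alpha-1} q(r,s)\,dr = u^{-\alpha}\pi(u,s)$.

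The main step to justify carefully is the Portmanteau argument: for it I need $\pi(u, \cdot)$ to be continuous on $S^{d-1}$, which follows by dominated convergence from the hypothesis that $q(r, \cdot)$ is continuous and the uniform bound $q \le \alpha$ (ensuring the integrand $r^{-\alpha-1} q(r,\cdot)$ has the integrable majorant $\alpha r^{-\alpha-1}$ on $[u,\infty)$). The other technical point is the reduction from vague convergence on all continuity sets of $\Gamma$ to the subclass $\{D_{u,A}\}$; I would handle this either by invoking the standard fact that these sets form a convergence-determining semiring, or by approximating any $f \in C_c(\Gamma)$ by simple functions of the form $\sum_i c_i\,\one_{D_{u_i, A_i}}$ in polar coordinates.
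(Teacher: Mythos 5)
Your argument is correct, and it takes a genuinely different route from the paper's. The paper verifies vague convergence via the general criterion ``$\limsup$ on all closed sets bounded away from zero, plus exact convergence on complements of balls $B_\vep^c$''; the $\limsup$ half requires showing that $(r,s)\mapsto\int_0^\infty\one_A((r\wedge u)s)\Pi(du,s)$ is upper semicontinuous and then invoking a separate lemma (an upper-semicontinuous Portmanteau bound for weakly convergent finite measures), followed by an interchange of integrals and the identity $\alpha\pi(r,s)-r\,\partial_r\pi(r,s)=q(r,s)$ to recover $M$. You instead restrict attention to the polar rectangles $D_{u,A}$, where the event $\{t^{-1}H^t\in D_{u,A}\}=\{\|H\|>tu,\ T>u,\ H/\|H\|\in A\}$ factorizes cleanly through the conditional law of $T$ given $H$, so the limit follows from one-dimensional regular variation of $V$, the tail-balance condition \eqref{TailBalDOAd}, and continuity of $\pi(u,\cdot)$ (which the paper also establishes, by the same dominated-convergence argument). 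Your computation of the limit as $u^{-\alpha}\sigma(S^{d-1})^{-1}\int_A\pi(u,s)\sigma(ds)=M(D_{u,A})/\sigma(S^{d-1})$ is exactly right, and it bypasses both the semicontinuity lemma and the $q=\alpha\pi-r\partial_r\pi$ manipulation. The price is the reduction step: you must justify that convergence on the semiring generated by the $D_{u,A}$ (for $A$ ranging over a dissecting class of $\sigma$-continuity sets) implies vague convergence on $\Gamma$. That is a standard fact in the multivariate regular variation literature (it is essentially how \cite{resnick:2007} characterizes such convergence in polar coordinates), and you flag it appropriately; note that if you go the route of approximating $f\in C_c(\Gamma)$ by simple functions over polar rectangles you will need a two-sided (sandwiching) approximation, since you only control set probabilities, not integrals, in the limit. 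With that point filled in by a citation or a short argument, your proof is complete. In summary: your approach is more elementary and shorter for this specific lemma because it exploits the product structure of the tempering construction, while the paper's approach is the one that generalizes to limits taken over arbitrary closed sets.
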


For the proof, we shall need the following result from weak convergence. This result is similar to Theorem 1.3.4 in \cite{vandervaart:wellner:1996}, which states the corresponding result for probability measures.  Although the result is well known, we include the proof here for completeness, since we could not locate a suitable reference.

\begin{lemma}\label{l2} Suppose $(\mu_n)$ is a sequence of measures on some metric space $\mathcal S$ equipped with the Borel sigma-field, converging weakly to some finite measure $\mu$. Then, for all bounded non-negative upper semicontinuous functions $f$, we have
$
\limsup_{n\to\infty}\int fd\mu_n\le\int fd\mu\,.
$
\end{lemma}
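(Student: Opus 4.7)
The plan is to reduce the upper semicontinuous case to the continuous case by a standard \emph{upper envelope} approximation on the metric space $\mathcal S$. For each $k \ge 1$, I would set
$$f_k(x) := \sup_{y \in \mathcal S}\bigl[f(y) - k\cdot d(x,y)\bigr].$$
I expect the following three properties to be easy to verify: (i) each $f_k$ is $k$-Lipschitz, hence continuous; (ii) $0 \le f(x) \le f_k(x) \le \|f\|_\infty$, by taking $y=x$ in the supremum and using boundedness of $f$; (iii) $f_k(x) \downarrow f(x)$ pointwise as $k\to\infty$. Step (iii) is the only place where upper semicontinuity is used and is where I would concentrate: given $x$ and $\varepsilon>0$, u.s.c.\ at $x$ gives some $\delta>0$ with $f(y) \le f(x)+\varepsilon$ whenever $d(x,y)<\delta$; for $y$ with $d(x,y)\ge\delta$ one has $f(y)-k\,d(x,y) \le \|f\|_\infty - k\delta$, and this is $\le f(x)+\varepsilon$ once $k$ is large. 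Combining the two regions yields $f_k(x)\le f(x)+\varepsilon$ for all large $k$.

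With the $f_k$ in hand, the core estimate is immediate. Since each $f_k$ is bounded and continuous and $\mu_n \weak \mu$, we have $\int f_k\,d\mu_n \to \int f_k\,d\mu$ as $n\to\infty$ for every fixed $k$. Using $0 \le f \le f_k$ and non-negativity of the measures,
$$\limsup_{n\to\infty}\int f\,d\mu_n \;\le\; \limsup_{n\to\infty}\int f_k\,d\mu_n \;=\; \int f_k\,d\mu.$$

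Finally, I would let $k\to\infty$. Because $f_k \downarrow f$ with $f_1$ bounded and $\mu$ finite, monotone convergence gives $\int f_k\,d\mu \to \int f\,d\mu$, which combined with the previous display closes the argument.

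I do not anticipate a serious obstacle: the only subtle step is the pointwise convergence $f_k \downarrow f$, which is a routine consequence of u.s.c.\ once written out carefully. One minor ambiguity worth flagging is what ``$\mu_n$ converges weakly to $\mu$'' means for finite (non-probability) measures in this paper; I would interpret it as $\int g\,d\mu_n \to \int g\,d\mu$ for all bounded continuous $g$, which is the formulation directly used above. (If instead the Portmanteau form $\limsup \mu_n(F) \le \mu(F)$ for closed $F$ plus $\mu_n(\mathcal S)\to\mu(\mathcal S)$ is assumed, the two are equivalent on metric spaces and the same proof applies.)
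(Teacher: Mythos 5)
Your proof is correct, and it takes a genuinely different route from the paper's. The paper approximates $f$ from above by the simple functions $f_k=\frac1k\sum_{i=1}^k\one\left(f^{-1}\left[\frac{i-1}k,1\right)\right)$, observes that each super-level set $f^{-1}\left[\frac{i-1}k,1\right)$ is closed by upper semicontinuity, and applies the closed-set half of the Portmanteau theorem, $\limsup_n\mu_n(F)\le\mu(F)$, incurring an error $\frac1k\mu(\mathcal S)$ that vanishes as $k\to\infty$ because $\mu$ is finite. You instead approximate $f$ from above by the $k$-Lipschitz upper envelopes $f_k(x)=\sup_y[f(y)-k\,d(x,y)]$, use the test-function definition of weak convergence directly on each $f_k$, and pass to the limit in $k$ by monotone (or dominated) convergence, again using finiteness of $\mu$. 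All three of your verifications (i)--(iii) are sound, and (iii) is exactly where u.s.c.\ enters, just as the closedness of the super-level sets is where it enters in the paper. The trade-off: your argument needs only the ``$\int g\,d\mu_n\to\int g\,d\mu$ for bounded continuous $g$'' formulation and so sidesteps any appeal to Portmanteau for non-probability measures, whereas the paper's argument is purely set-theoretic (no envelope construction, no metric structure beyond what Portmanteau already uses) but leans on the closed-set inequality. Your closing remark about the two formulations of weak convergence being equivalent on metric spaces is accurate and disposes of the only ambiguity; in the paper's application the $\mu_n$ are finite restrictions of normalized tail measures, for which both formulations are available.
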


\begin{proof} Since $f$ is bounded and non-negative, we can assume without loss of generality that $0\le f<1$. For $k\ge1$, denote $f_k:=\frac1k\sum_{i=1}^k\one\left(f^{-1}\left[\frac{i-1}k,1\right)\right)$.
It is easy to see that $f\le f_k\le f+\frac1k$.
Thus, for fixed $k\ge1$,
\begin{eqnarray*}
\limsup_{n\to\infty}\int fd\mu_n&\le&\limsup_{n\to\infty}\int f_kd\mu_n
\le\frac1k\sum_{i=1}^k\limsup_{n\to\infty}\mu_n\left(f^{-1}\left[\frac{i-1}k,1\right)\right)\\
&\le&\frac1k\sum_{i=1}^k\mu\left(f^{-1}\left[\frac{i-1}k,1\right)\right)
\le\int fd\mu+\frac1k\mu({\mathcal S})\,,
\end{eqnarray*}
by the Portmanteau Theorem (Theorem 2.1, \cite{billingsley:1968})
and the observation that $f^{-1}\left[\frac{i-1}k,1\right)$
is a closed set because $f$ is upper semicontinuous. Since $\mu$ is a finite measure, this completes the proof.
\end{proof}

\begin{proof}[Proof of Lemma \ref{l1}]Note that if $q(r,\cdot)$ is continuous for all $r>0$, then the same is true for $\pi(u,\cdot)$ for all $u>0$. To see this, suppose that the former holds. Fix $u>0$ and $s_n,s\in S^{d-1}$ such that $s_n\to s$. By the assumption on $q$ and the dominated convergence theorem, it follows that
$
\lim_{n\to\infty}\int_u^\infty r^{-\alpha-1}q(r,s_n)dr=\int_u^\infty r^{-\alpha-1}q(r,s)dr\,.
$
Then $\pi(u,s_n)\to\pi(u,s)$, so $\pi(u,\cdot)$ is continuous for every $u>0$. A similar argument shows that $\frac{\partial\pi(u,\cdot)}{\partial u}$ is continuous for every $u>0$.

For the proof, we shall use the fact that as $t\to\infty$,
\begin{equation}\label{l1.eq3}
P(\|H\|>t)^{-1}P\left[t^{-1}\|H\|\in dr,\frac H{\|H\|}\in ds\right]\vague\frac\alpha{\sigma(S^{d-1})}r^{-\alpha-1}dr\sigma(ds)
\end{equation}
 which is a restatement of Theorem 8.2.18 in \cite{meerschaert:scheffler:2001}.
It suffices to show that for every closed set $A\subset\bbr^d\setminus\{0\}$,
\begin{equation}\label{l1.eq1}
\limsup_{t\to\infty}{P(t^{-1}H^t\in A)}{P(\|H\|>t)^{-1}}\le{\sigma(S^{d-1})^{-1}}M(A)\,,
\end{equation}
and that for every $\vep>0$,
\begin{equation}\label{l1.eq2}
\lim_{t\to\infty}{P(t^{-1}H^t\in B_\vep^c)}{P(\|H\|>t)^{-1}}={\sigma(S^{d-1})^{-1}}M(B_\vep^c)\,,
\end{equation}
where $B_r$ is the closed ball of radius $r$ centered at origin:
$
B_r:=\{x\in\bbr^d:\|x\|\le r\}\,.
$
Fix a closed set $A\subset\bbr^d\setminus\{0\}$ and note that
$$
P(t^{-1}H^t\in A)
=\int_0^\infty\int_{S^{d-1}}\int_0^\infty\one_A((r\wedge u)s)\Pi(du,s)\ P\big(t^{-1}\|H\|\in dr,\frac H{\|H\|}\in ds\big)
$$
Fix sequences $r_n$ and $s_n$ so that $r_n>0$, $r_n\to r>0$, $s_n\in S^{d-1}$ and $s_n\to s$. Then
\begin{eqnarray*}
&&\limsup_{n\to\infty}\int_0^\infty\one_A((r_n\wedge u)s_n)\Pi(du,s_n)\\
&\le&1-\int_0^\infty\liminf_{n\to\infty}\one_{A^c}((r_n\wedge u)s_n)\left(-\frac{\partial\pi(u,s_n)}{\partial u}\right)du\\
&=&1-\int_0^\infty\left(-\frac{\partial\pi(u,s)}{\partial u}\right)\liminf_{n\to\infty}\one_{A^c}((r_n\wedge u)s_n)du
\le\int_0^\infty\one_A((r\wedge u)s)\Pi(du,s)
\end{eqnarray*}
by Fatou's Lemma, continuity of $\frac{\partial}{\partial u}\pi(u,\cdot)$, and the fact that $A^c$ is open. 
Then 
$
(r,s)\mapsto\int_0^\infty\one_A((r\wedge u)s)\Pi(du,s)
$
is upper semicontinuous. 
From \eqref{l1.eq3}, it follows that for all $\vep>0$, the restriction of
$
P(\|H\|>t)^{-1}P\left[t^{-1}\|H\|\in dr,H/{\|H\|}\in ds\right]
$
to $B_\vep^c$ converges weakly to that of
$
\alpha r^{-\alpha-1}dr\sigma(ds)/{\sigma(S^{d-1})}.
$
Thus, by Lemma \ref{l2} and the fact that $A$ is bounded away from zero, it follows that
\begin{equation*}\begin{split}
\limsup_{t\to\infty}&\frac{P(t^{-1}H^t\in A)}{P(\|H\|>t)}
\le\int_0^\infty\int_{S^{d-1}}\int_0^\infty\one_A((r\wedge u)s)\Pi(du,s)\frac\alpha{\sigma(S^{d-1})}r^{-\alpha-1}dr\sigma(ds)\\
&=\int_0^\infty\int_{S^{d-1}}\left\{\int_0^r\one_A(us)\Pi(du,s)\right\}
\frac\alpha{\sigma(S^{d-1})}r^{-\alpha-1}dr\sigma(ds)\\
&+\int_0^\infty\int_{S^{d-1}}\left\{\int_r^\infty\one_A(rs)
\Pi(du,s)\right\}\frac\alpha{\sigma(S^{d-1})}r^{-\alpha-1}dr\sigma(ds)
=:I_1+I_2\,.
\end{split}\end{equation*}
Note that Lemma \ref{l2} applies since $\Pi(du,s)$ is a probability measure for each $s$.

A change of the order of integration yields that
$$
I_1=\frac1{\sigma(S^{d-1})}\int_0^\infty\int_{S^{d-1}}\one_A(us)u^{-\alpha}\left(-\frac{\partial\pi(u,s)}{\partial u}\right)\sigma(ds)du\,.
$$
It is immediate that
$
I_2={\sigma(S^{d-1})^{-1}}\int_0^\infty\int_{S^{d-1}}\one_A(rs)\alpha\pi(r,s)r^{-\alpha-1}dr\sigma(ds)\,.
$
Thus,
\begin{equation}\begin{split}\label{l1.eq4}
&\limsup_{t\to\infty}\frac{P(t^{-1}H^t\in A)}{P(\|H\|>t)}\\
&\le\frac1{\sigma(S^{d-1})}\int_0^\infty\int_{S^{d-1}}\one_A(rs)\left\{\alpha\pi(r,s)-r\frac{\partial\pi(r,s)}{\partial r}\right\}r^{-\alpha-1}dr\sigma(ds)\,.
\end{split}\end{equation}
From \eqref{eq3}, it follows that
$
\alpha\pi(r,s)-r\frac{\partial}{\partial r}\pi(r,s)=q(r,s)\,.
$
Plugging this in \eqref{l1.eq4} yields that
\begin{equation*}\begin{split}
\limsup_{t\to\infty}\frac{P(t^{-1}H^t\in A)}{P(\|H\|>t)}
&\le\frac1{\sigma(S^{d-1})}\int_0^\infty\int_{S^{d-1}}\one_A(rs)q(r,s)r^{-\alpha-1}dr\sigma(ds)=\frac{M(A)}{\sigma(S^{d-1})}\,,
\end{split}\end{equation*}
thus showing \eqref{l1.eq1}.  For \eqref{l1.eq2}, note that as $t\to\infty$,
\begin{equation*}\begin{split}
P(t^{-1}H^t\in B_\vep^c)
&=\int_\vep^\infty\int_{S^{d-1}}\pi(\vep,s)P\left(t^{-1}\|H\|\in dr,\frac H{\|H\|}\in ds\right)\\
&\sim P(\|H\|>t)\int_\vep^\infty\int_{S^{d-1}}\pi(\vep,s)\frac\alpha{\sigma(S^{d-1})}r^{-\alpha-1}dr\sigma(ds)\\
&=P(\|H\|>t)\frac1{\sigma(S^{d-1})}\int_{S^{d-1}}\vep^{-\alpha}\pi(\vep,s)\sigma(ds)
=\frac{P(\|H\|>t)}{\sigma(S^{d-1})}M(B_\vep^c)\,,
\end{split}\end{equation*}
by \eqref{l1.eq3}, the fact that $\pi(\vep,\cdot)$ is continuous, and the definition of $\pi$. 
\end{proof}

The following theorem is the main result of this paper.  It shows that the tempered random walk \eqref{TRWdef} converges weakly to a tempered stable, for suitable tempering constants $v_n$ and suitable centering vectors $a_n$.

\begin{theorem}\label{t1} For $n\ge1$ let
$
b_n:=\inf\left\{x:P(\|H\|>x)\le n^{-1}\right\}\,.
$
If the sequence $(v_n)$ satisfies
\begin{equation}\label{t1.eq3}
\lim_{n\to\infty}v_n^{-1}{b_n}=\sigma(S^{d-1})^{1/\alpha}\,,
\end{equation}
then,
\begin{equation}\label{t1.claim}
v_n^{-1}S_n(n)-a_n\Rightarrow\rho
\end{equation}
where $\rho$ is an infinitely divisible probability measure on $\bbr^d$ with L\'evy representation $[0,0,M]$,
and $(a_n)$ is defined by
\begin{equation}\label{an:defn}
a_n:=n\int_{\{\|x\|<1\}}xP(v_n^{-1}Y_{n1}\in dx)\,.
\end{equation}
\end{theorem}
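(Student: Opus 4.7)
The plan is to apply the classical weak convergence theorem for row sums of i.i.d.\ triangular arrays of random vectors (for instance Theorem 3.2.22 of \cite{meerschaert:scheffler:2001}), which reduces \eqref{t1.claim} with the canonical centering \eqref{an:defn} to verifying two conditions: (i) the vague convergence of intensities $nP(v_n^{-1}Y_{n1}\in\cdot)\vague M(\cdot)$ on $\Gamma$; and (ii) the no-Gaussian-component truncated-second-moment condition $\lim_{\vep\downarrow 0}\limsup_{n\to\infty}n\int_{\|x\|\le\vep}\|x\|^2 P(v_n^{-1}Y_{n1}\in dx)=0$. Once these are in hand, the specific form of $a_n$ in \eqref{an:defn} is exactly the canonical truncation centering that produces the L\'evy triple $[0,0,M]$ in the limit.

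For (i), note that $v_n^{-1}Y_{n1}\eid v_n^{-1}H^{v_n}$ in the notation of \eqref{Htdef}, so by Lemma \ref{l1},
$$
\frac{P(v_n^{-1}Y_{n1}\in\cdot)}{P(\|H\|>v_n)}\vague\frac{1}{\sigma(S^{d-1})}M(\cdot)
$$
on $\Gamma$. It therefore suffices to show $nP(\|H\|>v_n)\to\sigma(S^{d-1})$. Because $P(\|H\|>\cdot)$ is regularly varying of index $-\alpha$, the definition of $b_n$ yields $nP(\|H\|>b_n)\to 1$, and regular variation combined with \eqref{t1.eq3} gives $P(\|H\|>v_n)/P(\|H\|>b_n)\to(b_n/v_n)^\alpha=\sigma(S^{d-1})$, completing the first step.

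For (ii), I would write $\|v_n^{-1}Y_{n1}\|=v_n^{-1}(\|H\|\wedge v_nT)$ and decompose the integral according to whether $\|H\|\le v_nT$ (so the minimum equals $\|H\|$) or $\|H\|>v_nT$ (so the minimum equals $v_nT$). On the first event the contribution reduces to a truncated second moment of the regularly varying variable $v_n^{-1}\|H\|$, which by Karamata's theorem is asymptotic to a constant multiple of $\vep^{2-\alpha}$; on the second event, conditioning on $T$ and invoking $nP(\|H\|>v_n)\to\sigma(S^{d-1})$ together with the integrability of $r^{1-\alpha}$ against $q(r,s)\sigma(ds)/r^{\alpha+1}\,dr$ near the origin gives a bound of the same order. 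Both contributions then vanish as $\vep\downarrow 0$ since $\alpha<2$ and $q\le\alpha$. I expect this uniform truncation estimate, in particular the interchange of $\limsup_n$ with $\lim_{\vep\downarrow 0}$ across the two regimes of the minimum, to be the main technical hurdle; Potter bounds on the tail of $\|H\|$ should make the estimates uniform in $n$. With (i) and (ii) verified, the triangular array theorem directly yields \eqref{t1.claim} with L\'evy triple $[0,0,M]$, as claimed.
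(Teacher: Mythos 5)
Your proposal follows essentially the same route as the paper: reduce to the two conditions of the triangular-array convergence theorem \cite[Theorem 3.2.2]{meerschaert:scheffler:2001}, obtain the vague convergence $nP(v_n^{-1}Y_{n1}\in\cdot)\vague M(\cdot)$ from Lemma \ref{l1} together with $nP(\|H\|>v_n)\to\sigma(S^{d-1})$ (derived exactly as you do, from $nP(\|H\|>b_n)\to1$, regular variation, and \eqref{t1.eq3}), and verify the truncated second-moment condition via Karamata. The one place you make extra work for yourself is the regime where the minimum is achieved by $v_nT$: the step you flag as the main technical hurdle (conditioning on $T$, Potter bounds, integrability of the law of $T$ near the origin) is unnecessary, since on the event $\{\|Y_{n1}\|\le v_n\delta,\ \|H_1\|>v_n\delta\}$ one necessarily has $v_nT_1\le v_n\delta$, so $\|Y_{n1}\|^2\le v_n^2\delta^2$ pointwise and the contribution is at most $v_n^2\delta^2P(\|H\|>v_n\delta)\sim\delta^{2-\alpha}v_n^2P(\|H\|>v_n)$, which together with the Karamata estimate on $\{\|H_1\|\le v_n\delta\}$ gives the bound $C\delta^{2-\alpha}$ with no information about the distribution of $T$ at all; as stated, your integrability claim for $r^{1-\alpha}$ against $q(r,s)r^{-\alpha-1}dr$ near the origin is actually false for $\alpha\ge 1/2$ (the correct bound is on $\Pi(du,s)\le \alpha u^{-1}du$), so you would need to repair that step if you insisted on your decomposition.
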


\begin{proof} Note that
$nP(\|H\|>b_n)\to 1$ \cite[p.\ 24]{resnick:2007}. 
Since $P(\|H\|>\cdot)$ varies regularly with index $-\alpha$, \eqref{t1.eq3} implies
$
{P(\|H\|>v_n)}/{P(\|H\|>b_n)}\to \sigma(S^{d-1})\,.
$
Then
\begin{equation}\label{t1.eq5}
\lim_{n\to\infty}nP(\|H\|>v_n)=\sigma(S^{d-1})\,.
\end{equation}
An appeal to Lemma \ref{l1} shows that
\begin{equation}\label{t1.eq6}
nP(v_n^{-1}Y_{n1}\in\cdot)\vague M(\cdot) ,
\end{equation}
and then it suffices to check \cite[Theorem 3.2.2]{meerschaert:scheffler:2001}:
\begin{equation}\label{t1.eq4}
\lim_{\delta\downarrow0}\limsup_{n\to\infty}nv_n^{-2}E\left\|Y_{n1}\one(\|Y_{n1}\|\le v_n\delta)\right\|^2=0\,.
\end{equation}
For this, note that
$\|Y_{n1}\|^2\one(\|Y_{n1}\|\le v_n\delta)
\le\|H_1\|^2\one(\|H_1\|\le v_n\delta)+v_n^2T_1^2\one(v_nT_1\le v_n\delta,\|H_1\|>v_n\delta)
\le\|H_1\|^2\one(\|H_1\|\le v_n\delta)+v_n^2\delta^2\one(\|H_1\|>v_n\delta)$.
Since $P(\|H\|>\cdot)$ is regularly varying with index $-\alpha$ and $\alpha<2$, by Karamata's theorem \cite[Theorem 2.1]{resnick:2007} it follows that
$E\left[\|H\|^2\one(\|H\|\le v_n\delta)\right]\sim(v_n\delta)^2P(\|H\|>v_n\delta)\alpha/(2-\alpha)
\sim\delta^{2-\alpha}v_n^{2}P(\|H\|>v_n)\alpha/(2-\alpha)$ as $n\to\infty$.
Using the regular variation of $P(\|H\|>\cdot)$ once again, it is immediate that
$
E\left[v_n^2\delta^2\one(\|H\|>v_n\delta)\right]=v_n^2\delta^2P(\|H\|>v_n\delta)\sim\delta^{2-\alpha}v_n^2P(\|H\|>v_n)\,.
$
To complete the proof, use \eqref{t1.eq5} to obtain $C<\infty$ such that for all $\delta>0$,
\begin{equation}\label{t1.eq7}
\limsup_{n\to\infty}nv_n^{-2}E\left\|Y_{n1}\one(\|Y_{n1}\|\le v_n\delta)\right\|^2\le C\delta^{2-\alpha}\,.
\end{equation}
\end{proof}

The next two results show that the centering constants in \eqref{t1.claim} can be chosen in the same way as for \eqref{doadef} when $\alpha\neq1$.  We say that a tempered stable law with index $0<\alpha<1$ is centered if its log-characteristic function can be written in the form \eqref{LevyRepn3} where $M$ is given by \eqref{defM}.

\begin{theorem}\label{t2} If $0<\alpha<1$ and $v_n$ satisfies \eqref{t1.eq3}, then
$
v_n^{-1}S_n(n)\Rightarrow\rho_1
$
where $\rho_1$ is centered tempered stable.
\end{theorem}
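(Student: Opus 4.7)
The plan is to show that for $0 < \alpha < 1$ the centering vector $a_n$ from Theorem \ref{t1} converges to
\[
c := \int_{\{\|x\|<1\}} x \, M(dx),
\]
and then invoke Slutsky's theorem together with Theorem \ref{t1} to conclude $v_n^{-1} S_n(n) \Rightarrow \rho + c$. The vector $c$ is well-defined because, by \eqref{defM} and the bound $q \le \alpha$,
\[
\int_{\{\|x\| < 1\}} \|x\|\, M(dx) \le \alpha\, \sigma(S^{d-1}) \int_0^1 r^{-\alpha}\, dr < \infty,
\]
which uses $\alpha < 1$ in an essential way. Once $a_n \to c$ is established, the log-characteristic function of $\rho + c$ is $\psi(\lambda) + i\langle \lambda, c\rangle$, and with $\psi$ given by \eqref{LevyRepn2} a direct rearrangement recasts it in the form \eqref{LevyRepn3} with L\'evy measure $M$, identifying $\rho_1 := \rho + c$ as a centered tempered stable law.

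To prove $a_n \to c$, for $\delta>0$ split
\[
a_n = n \int_{\{\delta<\|x\|<1\}} x\, P(v_n^{-1}Y_{n1}\in dx) + n \int_{\{\|x\|\le\delta\}} x\, P(v_n^{-1}Y_{n1}\in dx).
\]
The first piece converges to $\int_{\{\delta<\|x\|<1\}} x\, M(dx)$ by the vague convergence \eqref{t1.eq6}, using that the integrand is a bounded measurable function supported on a set bounded away from $0$, whose discontinuities lie on spheres and hence form an $M$-null set (since $M$ places no mass on any sphere by \eqref{defM}). As $\delta \downarrow 0$, this limit tends to $c$ by dominated convergence.

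The main obstacle is to control the second piece uniformly in $n$, that is, to show
\[
\lim_{\delta \downarrow 0} \limsup_{n\to\infty} n v_n^{-1} \Exp\bigl[\|Y_{n1}\|\, \one(\|Y_{n1}\| \le v_n \delta)\bigr] = 0.
\]
This is the first-moment analogue of \eqref{t1.eq4}, and it is precisely here that $\alpha < 1$ is needed. Using the decomposition from the proof of \eqref{t1.eq4},
\[
\|Y_{n1}\|\,\one(\|Y_{n1}\|\le v_n\delta) \le \|H_1\|\,\one(\|H_1\|\le v_n\delta) + v_n\delta\,\one(\|H_1\|>v_n\delta),
\]
it suffices to bound $n v_n^{-1} \Exp[\|H\|\, \one(\|H\|\le v_n\delta)]$ and $n\delta\, P(\|H\|>v_n\delta)$. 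Karamata's theorem for $\alpha<1$ gives $\Exp[\|H\|\,\one(\|H\|\le x)] \sim \tfrac{\alpha}{1-\alpha} x\, P(\|H\|>x)$ as $x\to\infty$, and together with the regular variation of $P(\|H\|>\cdot)$ with index $-\alpha$ and \eqref{t1.eq5}, both terms are asymptotically bounded by a universal constant times $\delta^{1-\alpha}$. Since $1-\alpha > 0$, these bounds vanish as $\delta \downarrow 0$, which completes the argument.
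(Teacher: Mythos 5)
Your proposal is correct and follows essentially the same route as the paper's proof: split $a_n$ at radius $\delta$ (the paper uses $\vep$), identify the outer piece via the vague convergence \eqref{t1.eq6}, and kill the inner piece with the bound $\|Y_{n1}\|\one(\|Y_{n1}\|\le v_n\delta)\le \|H_1\|\one(\|H_1\|\le v_n\delta)+v_n\delta\one(\|H_1\|>v_n\delta)$ together with Karamata's theorem and \eqref{t1.eq5}, which is exactly where $\alpha<1$ enters. The extra justifications you supply (finiteness of $\int_{\{\|x\|<1\}}\|x\|M(dx)$ and the $M$-nullity of spheres) are correct and only make explicit what the paper leaves implicit.
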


\begin{proof} In view of Theorem \ref{t1} and \eqref{LevyRepn1}, it suffices to show that, if $a_n$ is defined by \eqref{an:defn}, then
$
a_n\to \int_{\{\|x\|<1\}}xM(dx)\,.
$
Fix $0<\vep<1$ and note that
$$
a_n=n\int_{\{\vep<\|x\|<1\}}xP(v_n^{-1}Y_{n1}\in dx)+n\int_{\{\|x\|\le\vep\}}xP(v_n^{-1}Y_{n1}\in dx)
=:I_1+I_2\,.
$$
Clearly, by \eqref{t1.eq6},
$
\lim_{n\to\infty}I_1=\int_{\{\vep<\|x\|<1\}}xM(dx)\,.
$
Thus, it suffices to show
\begin{equation}\label{t2.eq1}
\lim_{\vep\downarrow0}\limsup_{n\to\infty}\|I_2\|=0\,.
\end{equation}
Note
$\|I_2\|\le nv_n^{-1}E[\|Y_{n1}\|\one(\|Y_{n1}\|\le v_n\vep)]
\le nv_n^{-1}[E(\|H\|\one(\|H\|\le v_n\vep))+v_n\vep P(\|H\|>v_n\vep)]$.
Since $\alpha<1$, Karamata along with regular variation yields
$E\left(\|H\|\one(\|H\|\le v_n\vep)\right)\sim v_n\vep^{1-\alpha}P(\|H\|>v_n)\alpha/(1-\alpha)$.
Then \eqref{t2.eq1} follows, using \eqref{t1.eq5} and regular variation.
\end{proof}

\begin{theorem}\label{t3} Suppose $\alpha>1$ and that for some $\beta>\alpha$,
\begin{equation}\label{t3.hypo}
\limsup_{u\downarrow0}\sup_{s\in S^{d-1}}{u^{1-\beta}}[\alpha-q(u,s)]<\infty
\end{equation}
where $v_n$ satisfies \eqref{t1.eq3}.  Then,
$
v_n^{-1}[S_n(n)- nE(H)]\Rightarrow\rho_2
$
where $\rho_2$ is an infinitely divisible law with no Gaussian component, L\'evy measure $M$ and mean
\begin{equation}\label{tsmean}
m=-\alpha\int_0^\infty\int_{S^{d-1}}\left\{\int_0^r(r-u)s\Pi(du,s)\right\}r^{-\alpha-1}dr\sigma(ds) .
\end{equation}
\end{theorem}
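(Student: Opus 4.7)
The plan is to combine Theorem \ref{t1} with Slutsky's theorem. Since $\alpha>1$, $\Exp(H)$ is finite. If we show $a_n - nv_n^{-1}\Exp(H) \to c$ for some $c\in\rd$, then Theorem \ref{t1} gives
\[
v_n^{-1}[S_n(n) - n\Exp(H)] = [v_n^{-1}S_n(n) - a_n] + [a_n - nv_n^{-1}\Exp(H)] \Rightarrow \rho*\delta_c,
\]
the law $\rho$ shifted by $c$. Since $\alpha>1$ and $q$ is bounded, $\int_{\{\|x\|>1\}}\|x\|M(dx)<\infty$, so $\rho$ has mean $\int_{\{\|x\|>1\}}xM(dx)$, and the shifted limit has mean $c + \int_{\{\|x\|>1\}}xM(dx)$. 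The task reduces to showing this mean equals $m$ from \eqref{tsmean}, i.e., $c = m - \int_{\{\|x\|>1\}}xM(dx)$.

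To compute $\lim[a_n - nv_n^{-1}\Exp(H)]$, use $H - Y_{n1} = (H/\|H\|)(\|H\|-v_nT_1)^+$ together with $a_n = nv_n^{-1}\Exp[Y_{n1}\one(\|Y_{n1}\|<v_n)]$ to get the clean decomposition
\[
a_n - nv_n^{-1}\Exp(H) = -K_1 - K_2,
\]
where $K_1 := nv_n^{-1}\Exp[(H/\|H\|)(\|H\|-v_nT_1)^+]$ and $K_2 := nv_n^{-1}\Exp[Y_{n1}\one(\|Y_{n1}\|\ge v_n)]$. Conditioning on $H_1$ (so $T_1$ has law $\Pi(\cdot,H/\|H\|)$) and substituting $\tilde r = \|H\|/v_n$ writes each as an integral against $nP(v_n^{-1}\|H\|\in d\tilde r, H/\|H\|\in ds)$, which by \eqref{l1.eq3} and \eqref{t1.eq5} converges vaguely on $\Gamma$ to $\alpha\tilde r^{-\alpha-1}d\tilde r\sigma(ds)$. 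For $K_2$ the domain is $\tilde r\ge 1$, the integrand is bounded by $\tilde r$, and Karamata's theorem (using $\alpha>1$) controls the large-$\tilde r$ tail; an integration by parts using the identity $q = \alpha\pi - r\,\partial_r\pi$ from \eqref{eq3} then yields $K_2 \to \int_{\{\|x\|>1\}}xM(dx)$. For $K_1$ the conditional expectation equals $\int_0^{\tilde r}(\tilde r - u)\Pi(du,s) = \int_0^{\tilde r}[1-\pi(u,s)]du$, and a change of order of integration in the limit shows $K_1 \to -m$ with $m$ given by \eqref{tsmean}. Combining the two limits gives $c = m - \int_{\{\|x\|>1\}}xM(dx)$, as required.

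The main obstacle is the uniform-integrability step in $K_1$: the integrand $\int_0^{\tilde r}[1-\pi(u,s)]du$ does not vanish near $\tilde r = 0$, so vague convergence on $\Gamma$ is not enough and one must rule out mass escaping to the origin. From \eqref{pidef} one computes
\[
1-\pi(u,s) = u^\alpha\int_u^\infty r^{-\alpha-1}[\alpha-q(r,s)]\,dr,
\]
and hypothesis \eqref{t3.hypo} gives $\alpha-q(r,s) = O(r^{\beta-1})$ uniformly in $s$, so $1-\pi(u,s) = O(u^{\min(\alpha,\beta-1)})$ and $\int_0^{\tilde r}[1-\pi(u,s)]du = O(\tilde r^{1+\min(\alpha,\beta-1)})$. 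Because $\beta > \alpha$ forces $1+\min(\alpha,\beta-1) > \alpha$, this power is exactly enough for $\tilde r^{-\alpha-1}$-integrability at the origin; applying Karamata's theorem to $\Exp[\|H\|^{1+\min(\alpha,\beta-1)}\one(\|H\|\le v_n\vep)]$ yields the uniform bound $\limsup_n$ (near-zero part of $K_1) = O(\vep^{\min(\alpha,\beta-1)+1-\alpha})\to 0$ as $\vep\downarrow 0$. In short, the role of the assumption $\beta>\alpha$ in \eqref{t3.hypo} is precisely to secure this tightness step.
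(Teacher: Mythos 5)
Your proposal is correct and follows essentially the same route as the paper: the same decomposition of $a_n-nv_n^{-1}E(H)$ into the truncation-overflow term ($K_2$, the paper's $I_2$) and the tempering-loss term ($K_1$, the paper's $I_1$), the same use of Theorem \ref{t1} plus a deterministic shift, and the same use of Karamata together with hypothesis \eqref{t3.hypo} to control the contribution near the origin, where you correctly isolate the role of $\beta>\alpha$ via the bound $\bar g(r,s)=\int_0^r[1-\pi(u,s)]du=O(r^{1+\min(\alpha,\beta-1)})$ (the paper's $\bar g(r,s)\le Cr^\beta$ after reducing to $\beta\le 2$). The only step you leave implicit is the joint continuity of $(r,s)\mapsto\int_0^r(r-u)s\,\Pi(du,s)$ needed to pass to the limit on the middle region $\vep\le r\le N$, but with your representation this is immediate from bounded convergence and the standing continuity of $q(r,\cdot)$, so it is a routine detail rather than a gap.
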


\begin{proof}
Let
\begin{equation}\label{t3.claim}
\theta:=\alpha\int_0^\infty\int_{S^{d-1}}\left\{\int_0^r(r-u)s\Pi(du,s)\right\}r^{-\alpha-1}dr\sigma(ds)+\int_{\{\|x\|\ge1\}}xM(dx)\,.
\end{equation}
We start with showing that the integrals on the right hand side of \eqref{t3.claim} are well defined.  Let
$
g(r,s):=\int_0^r(r-u)s\Pi(du,s)\,.
$
It is easy to see that
$
\|g(r,s)\|\le\int_0^r(r-u)\Pi(du,s)=:\bar g(r,s)\,.
$
Clearly,
\begin{equation}\label{t3.eq9}
\bar g(r,s)=r[1-\pi(r,s)]+\int_0^ru\frac{\partial\pi(u,s)}{\partial u}du\,,
\end{equation}
and hence,
$\frac{\partial}{\partial r}\bar g(r,s)=1-\pi(r,s)
=r^\alpha\int_r^\infty u^{-\alpha-2+\beta}u^{1-\beta}[\alpha-q(u,s)]du
=r^\alpha\int_r^1 u^{-\alpha-2+\beta}u^{1-\beta}[\alpha-q(u,s)]du+O(r^\alpha)$.  
Clearly \eqref{t3.hypo} holds with $\beta$ replaced by $\beta\wedge2$. Thus, without loss of generality, we can assume that $\beta\le2$.
Define
$K=\sup\{u^{1-\beta}[\alpha-q(u,s)]:s\in S^{d-1},0<u\le1\}$.
By hypothesis, $K<\infty$. Thus,
$$
r^\alpha\int_r^1 u^{-\alpha-2+\beta}\frac{\alpha-q(u,s)}{u^{\beta-1}}du
\le Kr^\alpha\int_r^1 u^{-\alpha-2+\beta}du\le K'r^{\beta-1}\,,
$$
where $K'=K/(\alpha+1-\beta)>0$ since $\beta\le2$ and $\alpha>1$. Thus, as $r\downarrow0$,
$
\frac{\partial}{\partial r}\bar g(r,s)=O(r^{\beta-1})
$
uniformly in $s$, and hence for some $C<\infty$,
\begin{equation}\label{t3.eq7}
\bar g(r,s)\le Cr^\beta,\,r\le1,s\in S^{d-1}\,.
\end{equation}
It follows that
$
\int_0^1\int_{S^{d-1}}\bar g(r,s)r^{-\alpha-1}dr\sigma(ds)<\infty\,.
$
It is easy to see that $\bar g(r,s)\le r$. Since $\alpha>1$,
$
\int_1^\infty\int_{S^{d-1}}\bar g(r,s)r^{-\alpha-1}dr\sigma(ds)<\infty\,.
$
Thus, the first integral in \eqref{t3.claim} is well defined. Since $\alpha>1$, it is easy to check that the second integral is also well defined.
Then it follows, using Theorem 3.1.14 and Remark 3.1.15 in \cite{meerschaert:scheffler:2001}, that any tempered stable law with index $\alpha>1$ has a finite mean.

Next we want to show that
\begin{equation}\label{t3.eq2}
\lim_{n\to\infty}\left[\frac n{v_n}E(H)-a_n\right]=\theta\,.
\end{equation}
Write
$
nv_n^{-1}E(H)-a_n=nE\left[v_n^{-1}(H_1-Y_{n1})\right]+nv_n^{-1}E\left[Y_{n1}\one(\|Y_{n1}\|\ge v_n)\right]=I_1+I_2\,.
$
Fix $1<N<\infty$ and write
\begin{eqnarray*}
I_2
&=&n\int_{\{1\le\|x\|<N\}}xP\left(v_n^{-1}Y_{n1}\in dx\right)+n\int_{\{\|x\|\ge N\}}xP\left(v_n^{-1}Y_{n1}\in dx\right):=I_{21}+I_{22}\,.
\end{eqnarray*}
By \eqref{t1.eq6}, it follows that
\begin{equation}\label{t3.eq1}
\lim_{n\to\infty}I_{21}=\int_{\{1\le\|x\|<N\}}xM(dx)\,.
\end{equation}
Using Karamata's Theorem we get
$\|I_{22}\|
\le nv_n^{-1}E\left[\|Y_{n1}\|\one(\|Y_{n1}\|\ge v_nN)\right]
\le nv_n^{-1}E\left[\|H\|\one(\|H\|\ge v_nN)\right]
\sim N^{1-\alpha}nP(\|H\|>v_n) \alpha/(\alpha-1)$
as $n\to\infty$. This, in view of \eqref{t1.eq5} show that
$
\lim_{N\to\infty}\limsup_{n\to\infty}\|I_{22}\|=0\,.
$
In conjunction with \eqref{t3.eq1}, this shows that
\begin{equation}\label{t3.eq3}
\lim_{n\to\infty}I_2=\int_{\{\|x\|\ge1\}}xM(dx)\,.
\end{equation}

It remains to show that
\begin{equation}\label{t3.eq4}
\lim_{n\to\infty}I_1=\alpha\int_0^\infty\int_{S^{d-1}}\left\{\int_0^r(r-u)s\Pi(du,s)\right\}r^{-\alpha-1}dr\sigma(ds)\,.
\end{equation}
To that end, fix $0<\vep<1<N<\infty$ and note that
\begin{eqnarray*}
I_1&=&nE\left[\frac H{\|H\|}\left(v_n^{-1}{\|H\|}-T\right)\one\left(v_n^{-1}{\|H\|}>T\right)\right]\\
&=&n\int_0^\vep\int_{S^{d-1}}g(r,s)P\left(v_n^{-1}{\|H\|}\in dr,\frac H{\|H\|}\in ds\right)\\
&&+n\int_\vep^N\int_{S^{d-1}}g(r,s)P\left(v_n^{-1}{\|H\|}\in dr,\frac H{\|H\|}\in ds\right)\\
&&+n\int_N^\infty\int_{S^{d-1}}g(r,s)P\left(v_n^{-1}{\|H\|}\in dr,\frac H{\|H\|}\in ds\right)=:I_{11}+I_{12}+I_{13}\,.
\end{eqnarray*}

We shall now show that $g$ is jointly continuous. Clearly,
$
g(r,s)=\bar g(r,s)s\,.
$
Thus, it suffices to show that $\bar g$ is jointly continuous. Since $q$ is assumed to be continuous in the second variable, an appeal to the dominated convergence theorem shows that $\pi$ is jointly continuous. By \eqref{eq3}, it follows that $\frac{\partial}{\partial u}\pi(u,\cdot)$ is continuous for every $u>0$. In view of \eqref{t3.eq9}, it suffices to show that the function
$
(r,s)\mapsto\int_0^ru\frac{\partial}{\partial u}\pi(u,s)du
$
is jointly continuous. For that, fix a sequence $r_n\to r$ and $s_n\to s$. Note that
\begin{eqnarray*}
\int_0^{r_n}u\frac{\partial \pi(u,s_n)}{\partial u}du
=\int_0^{r_n}u\frac{\partial\pi(u,s)}{\partial u}du+\int_0^{r_n}u\left[\frac{\partial\pi(u,s_n)}{\partial u}-\frac{\partial\pi(u,s)}{\partial u}\right]du
=:J_1+J_2\,.
\end{eqnarray*}
Clearly, as $n\to\infty$,
$J_1\to\int_0^ru\frac{\partial}{\partial u}\pi(u,s)du\,.$
Let $R=\sup_{n\ge1}r_n$ and note that,
\begin{eqnarray*}
|J_2|&\le&\int_0^Ru\left|\frac{\partial\pi(u,s_n)}{\partial u}-\frac{\partial\pi(u,s)}{\partial u}\right|du
\le R\int_0^\infty\left|\frac{\partial\pi(u,s_n)}{\partial u}-\frac{\partial\pi(u,s)}{\partial u}\right|du\\
&=&R\left[2\int_0^\infty\left\{\frac{\partial\pi(u,s_n)}{\partial u}\vee\frac{\partial\pi(u,s)}{\partial u}\right\}du+2\right]\,,
\end{eqnarray*}
the second equality following from the identity $|a-b|=2(a\vee b)-(a+b)$. Since,
$
\left|\frac{\partial}{\partial u}\pi(u,s_n)\vee\frac{\partial}{\partial u}\pi(u,s)\right|\le-\frac{\partial}{\partial u}\pi(u,s)\,,
$
an appeal to the dominated convergence theorem along with the fact that $\frac{\partial}{\partial u}\pi(u,\cdot)$ is continuous shows that
$
\lim_{n\to\infty}\int_0^\infty\left\{\frac{\partial}{\partial u}\pi(u,s_n)\vee\frac{\partial}{\partial u}\pi(u,s)\right\}du=-1\,,
$
which in turn shows that $J_2\to 0$ as $n\to\infty$.
This shows that $g$ is jointly continuous.

By \eqref{l1.eq3}, \eqref{t1.eq5} and the fact that $g$ is jointly continuous, it follows that
$
\lim_{n\to\infty}I_{12}=\alpha\int_\vep^N\int_{S^{d-1}}g(r,s)r^{-\alpha-1}dr\sigma(ds)\,.
$
Note that
\begin{eqnarray*}
\|I_{11}\|&\le&n\int_0^\vep\int_{S^{d-1}}\bar g(r,s)P\left(v_n^{-1}{\|H\|}\in dr,\frac H{\|H\|}\in ds\right)\\
&\le&Cn\int_0^\vep\int_{S^{d-1}}r^\beta P\left(v_n^{-1}{\|H\|}\in dr,\frac H{\|H\|}\in ds\right)\\
&=&Cnv_n^{-\beta}\int_0^{\vep v_n}r^\beta P(\|H\|\in dr)
\to C\frac\alpha{\beta-\alpha}\vep^{\beta-\alpha}\sigma(S^{d-1})
\end{eqnarray*}
as $n\to\infty$, using \eqref{t3.eq7}, Karamata's Theorem, and \eqref{t1.eq5}. This shows that $\lim_{\vep\downarrow0}\limsup_{n\to\infty}\|I_{11}\|=0$. Finally, by similar calculations and the fact that $\|g(r,s)\|\le r$, it follows that
$$
\|I_{13}\|\le nv_n^{-1}\int_{Nv_n}^\infty rP(\|H\|\in dr)
\to\frac\alpha{\alpha-1}\sigma(S^{d-1})N^{1-\alpha}\,.
$$
This shows that $\lim_{N\to\infty}\limsup_{n\to\infty}\|I_{13}\|=0$. Thus, \eqref{t3.eq4} follows. By \eqref{t3.eq3} and \eqref{t3.eq4}, \eqref{t3.eq2} follows.

From Theorem \ref{t1} and \eqref{t3.eq2} it follows that
\begin{equation}\label{t3.claim.a}
v_n^{-1}S_n(n)-\frac n{v_n}E(H)=v_n^{-1}S_n(n)-a_n+a_n-\frac n{v_n}E(H)\Rightarrow\rho-\theta:=\rho_2
\end{equation}
so that $\rho_2$ has L\'evy representation $[-\theta,0,M]$.  Using \cite[Remark 3.1.15]{meerschaert:scheffler:2001}, we can write the log-characteristic function of a tempered stable law with mean zero in the form \eqref{LevyRepn2}.  Then it follows easily that \eqref{tsmean} holds.
\end{proof}

\begin{remark}
As noted in Section \ref{sec2}, we can center to zero expectation in \eqref{doadef} when $\alpha>1$, or dispense with the centering when $\alpha<1$.  Theorems \ref{t2} and \ref{t3} shows that the same centering can be used for the tempered random walk.  If $\alpha<1$, the limit is centered tempered stable, analogous to a centered stable law.  If $\alpha>1$, and we center to zero expectation for the {\em untempered} random walk jumps, the limit contains a shift depending on the spectral measure and the tempering function.  The shift comes from the fact that $I_1=nv_n^{-1}E\left[H_1-Y_{n1}\right]\to -m$ in \eqref{t3.eq4}.
\end{remark}

\begin{remark}
The special case $d=1$ is also important in applications (\cite{meerschaert:zhang:baeumer:2008}).  Suppose $d=1$, and that $H$ and $\pi(\cdot,\cdot)$ are as before. In this case, the conditional distribution of $T$ given $H$ can be written in a simpler form:
$P(T>u|H>0)=\pi(u,1)$ and $P(T>u|H<0)=\pi(u,-1)$.
Let $(v_n)$, $(a_n)$ and $\rho$ be as in Theorem \ref{t1}.
For $n\ge1$, suppose that $Y_{n1},\ldots,Y_{nn}$ are i.i.d. with
$$
Y_{n1}\eid\sgn(H)(|H|\wedge v_nT)\,.
$$
Let $S_n(k):=\sum_{j=1}^k Y_{nj}$. As a restatement of Theorem \ref{t1}, we obtain that
$$
v_n^{-1}S_n(n)-a_n\Rightarrow\rho\,.
$$
If $\alpha<1$, we can set $a_n=0$.  If $\alpha>1$, we can take $a_n=nv_n^{-1}E(H)$, provided
$$
\limsup_{u\downarrow0}\frac{2\alpha-q(u,1)-q(u,-1)}{u^{\beta-1}}<\infty
$$
for some $\beta>\alpha$.
\end{remark}

Let $\{X(t)\}$ be the L\'evy process generated by the tempered stable random vector $X$ with distribution $\rho$, so that $X(0)=0$ almost surely, $\{X(t)\}$ has stationary, independent increments, and $X(1)=X$ in distribution.  The next result shows that the tempered random walk \eqref{TRWdef} faithfully approximates the tempered stable process.

\begin{theorem}\label{t4}
Suppose that \eqref{t1.claim} holds as in Theorem \ref{t1}.  Then
\begin{equation}\label{t4.claim}
\{v_nS_n([nt])-ta_n\}_{t\geq 0}\Rightarrow \{X(t)\}_{t\geq 0}
\end{equation}
as $n\to\infty$ in the sense of finite dimensional distributions.
\end{theorem}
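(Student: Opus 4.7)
The plan is to exploit the fact that the $Y_{nj}$ form an i.i.d.\ array, so the increments of $v_n^{-1}S_n([n\,\cdot\,])$ over disjoint time intervals are independent, exactly as are the increments of the L\'evy process $\{X(t)\}$.  First I would fix an arbitrary partition $0=t_0<t_1<\cdots<t_k$, put $\Delta_j:=t_j-t_{j-1}$, and set
\[
U_{n,j}:=v_n^{-1}\bigl(S_n([nt_j])-S_n([nt_{j-1}])\bigr)-\Delta_j a_n,\qquad 1\le j\le k.
\]
The vectors $U_{n,1},\ldots,U_{n,k}$ are independent, since they depend on disjoint blocks of $(Y_{nj})$.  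A telescoping sum gives $v_n^{-1}S_n([nt_j])-t_j a_n=\sum_{i=1}^{j}U_{n,i}$, so by the continuous mapping theorem applied to the linear map $(u_i)_{i=1}^k\mapsto(\sum_{i\le j}u_i)_{j=1}^k$, the finite-dimensional convergence \eqref{t4.claim} reduces to the joint weak convergence $(U_{n,1},\ldots,U_{n,k})\Rightarrow(X_1,\ldots,X_k)$, where the $X_j$ are independent with $X_j\eid X(\Delta_j)$.  By independence on both sides, this in turn reduces to marginal convergence $U_{n,j}\Rightarrow X(\Delta_j)$ for every~$j$.

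The marginal step I would obtain by rerunning the proof of Theorem \ref{t1} with $n$ replaced by $m_n:=[nt_j]-[nt_{j-1}]$, keeping the truncation $v_n$ unchanged.  Since $m_n/n\to\Delta_j$, multiplying \eqref{t1.eq6} by $m_n/n$ yields $m_n P(v_n^{-1}Y_{n1}\in\cdot)\vague\Delta_j M(\cdot)$, and multiplying \eqref{t1.eq7} by the same factor yields the negligibility condition
\[
\lim_{\delta\downarrow0}\limsup_n m_n v_n^{-2}E\bigl\|Y_{n1}\one(\|Y_{n1}\|\le v_n\delta)\bigr\|^2=0.
\]
An appeal to Theorem 3.2.2 of \cite{meerschaert:scheffler:2001} then gives
\[
v_n^{-1}\bigl(S_n([nt_j])-S_n([nt_{j-1}])\bigr)-c_{n,j}\Rightarrow X(\Delta_j),\qquad c_{n,j}:=m_n\int_{\{\|x\|<1\}}x\,P(v_n^{-1}Y_{n1}\in dx)=\tfrac{m_n}{n}a_n,
\]
since the limit is infinitely divisible with L\'evy triple $[0,0,\Delta_j M]$, which is precisely the triple of $X(\Delta_j)$.

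It remains to replace $c_{n,j}$ by $\Delta_j a_n$, and since the discrepancy $c_{n,j}-\Delta_j a_n=(m_n/n-\Delta_j)a_n$ is deterministic, this is a Slutsky correction.  Using $|m_n/n-\Delta_j|\le 2/n$, it suffices to check that $\|a_n\|/n\to 0$.  The proof of Theorem \ref{t2} shows $a_n\to\int_{\{\|x\|<1\}}xM(dx)$ when $\alpha<1$, so $a_n=O(1)$; while \eqref{t3.eq2} shows $a_n=(n/v_n)E(H)-\theta+o(1)$ when $\alpha>1$, and since $v_n\to\infty$ this again gives $a_n/n\to 0$.  In either regime, the Slutsky step closes the argument.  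The only nontrivial point I expect is precisely this last bit of bookkeeping, namely verifying that the single centering sequence $a_n$ produced by Theorem \ref{t1} simultaneously centers every block $U_{n,j}$ to the correct limit; the remaining ingredients (vague convergence, negligibility, absence of a Gaussian component) are just rescalings by $m_n/n\to\Delta_j$ of estimates that have already been carried out in the proof of Theorem \ref{t1}.
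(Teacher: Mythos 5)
Your proposal follows essentially the same route as the paper: apply the triangular-array convergence criterion \cite[Theorem 3.2.2]{meerschaert:scheffler:2001} to blocks of size $\approx n\Delta_j$, using that \eqref{t1.eq6} and \eqref{t1.eq7} scale by the factor $m_n/n\to\Delta_j$, identify the limit as $[0,0,\Delta_j M]$, and then invoke independence of the increments to pass to finite-dimensional distributions. The reduction to marginals and the identification of the limiting triple are correct.

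The one soft spot is your verification that the centering discrepancy $(m_n/n-\Delta_j)a_n$ vanishes. You correctly reduce this to $\|a_n\|/n\to 0$, but you justify that fact by citing Theorem \ref{t2} (which requires $\alpha<1$) and \eqref{t3.eq2} (which requires $\alpha>1$ \emph{and} the extra hypothesis \eqref{t3.hypo}). Theorem \ref{t4} assumes neither: it covers $\alpha=1$ and makes no assumption like \eqref{t3.hypo}, so as written your argument does not close in those cases. The gap is easily filled — and this is exactly what the paper does — by a direct estimate from \eqref{t1.eq7}: by Cauchy--Schwarz,
\begin{equation*}
\frac{\|a_n\|}{n}\le\int_{\{\|x\|<1\}}\|x\|\,P(v_n^{-1}Y_{n1}\in dx)
\le\left\{v_n^{-2}E\left\|Y_{n1}\one(\|Y_{n1}\|\le v_n)\right\|^2\right\}^{1/2}=O(n^{-1/2}),
\end{equation*}
using \eqref{t1.eq7} with $\delta=1$. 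With that substitution your proof is complete and needs no appeal to Theorems \ref{t2} or \ref{t3}.
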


\begin{proof}
The L\'evy representation of the limit $\rho$ in \eqref{t1.claim} is $[0,0,M]$.  Use \eqref{t1.eq6} to get
$
\left[nt\right] P(v_n^{-1}Y_{n1}\in\cdot)\sim ntP(v_n^{-1}Y_{n1}\in\cdot)\vague tM(\cdot)
$
and \eqref{t1.eq4} to get
$
\lim_{\delta\downarrow0}\limsup_{n\to\infty}\left[nt\right] v_n^{-2}E\left\|Y_{n1}\one(\|Y_{n1}\|\le v_n\delta)\right\|^2=0\,.
$
Then $v_n^{-1}S_n\left(\left[nt\right]\right)-{a_n}t\Rightarrow\rho_t$ follows by the general convergence criteria for triangular arrays \cite[Theorem 3.2.2]{meerschaert:scheffler:2001}, where $\rho_t$ has L\'evy representation $[0,0,tM]$, since
\begin{eqnarray*}
\left\|{a_n}t-\left[nt\right]\int_{\{\|x\|<1\}}xP(v_n^{-1}Y_{n1}\in dx)\right\|
&\le&\int_{\{\|x\|<1\}}\|x\|P(v_n^{-1}Y_{n1}\in dx)\\
&\le&\left\{v_n^{-2}E\left\|Y_{n1}\one(\|Y_{n1}\|\le v_n)\right\|^2\right\}^{1/2}\to 0
\end{eqnarray*}
using \eqref{t1.eq7}.
To prove convergence of finite dimensional distributions, use the fact that increments of the random walk are independent.
\end{proof}

\begin{remark}
Take the exponential tempering function $q(r,s)=e^{-\lambda r}$ for $s=\pm 1$.  Then the random vectors $X(t)$ have smooth density functions $p(x,t)$ that solve a tempered fractional diffusion equation
$
{\partial_t}p=cq\,{\partial_{-x}^{\alpha,\lambda}} p+c(1-q)\,{\partial_{x}^{\alpha,\lambda}} p .
$
where $P(H<-r)/P(|H|>r)\sim q$ as $r\to\infty$.  The operator on the right hand side is the negative generator of the continuous convolution semigroup associated with $X$.   Some properties of the tempered fractional diffusion equation are developed in \cite{baeumer:meerschaert:2010}.  Theorem \ref{t4} shows that the tempered random walk \eqref{TRWdef} provides a useful approximation to the process $\{X(t)\}$.  In this case, the distribution of $T_i$ is given by
$P(T_i>u)=\pi(u,s)=u^\alpha\int_u^\infty r^{-\alpha-1}e^{-\lambda r}dr$,
which involves the incomplete gamma function.  The tempering thresholds $v_n$ do not depend on $q$.  For example, if $H$ belongs to the domain of normal attraction of some stable law, then we can take $v_n=cn^{1/\alpha}$ for some $c>0$.   Any random walk in the domain of attraction of a stable law can be modified using this tempering, to approximate an exponentially tempered stable.
\end{remark}

\begin{remark}
Suppose that the tempering variable is conditionally exponential with $P(T_i>t|\tfrac{H}{\|H\|}=s):=\pi(t,s)=e^{-\lambda_s t}$
for some continuous $s\mapsto \lambda_s>0$.
Let $h(r,s)=r^{-\alpha-1}q(r,s)$ and use \eqref{pidef} to get
$u^{-\alpha}e^{-\lambda_s u}=\int_u^\infty h(r,s)\,dr .$
Take derivatives with respect to $u$ on both sides to obtain
$-\alpha u^{-\alpha-1}e^{-\lambda_s u}-\lambda_s u^{-\alpha}e^{-\lambda_s u}=-h(u)$
and write
\begin{equation}\label{expT}
q(u,s)=u^{\alpha+1}h(u)=(\alpha+\lambda_s u)e^{-\lambda_s u} .
\end{equation}
Using this tempering function for the L\'evy measure \eqref{defM} yields a tempered stable law $X$ with a particularly simple tempering variable $T_i$.  If $1<\alpha<2$, then the form of the L\'evy measure shows that $X$ is the sum of two independent exponentially tempered stable laws, one with index $\alpha$, and the other with index $\alpha-1$.
\end{remark}

\begin{remark}
The goal of this paper is to construct random walk models that lead to a tempered stable limit.  To conclude this paper, we provide a practical, heuristic interpretation of those results.  A stable process serves to approximate a random walk with power-law jumps.  A tempered stable approximates the same random walk, once the largest jumps are reduced.  The tempering process represents an external force applied independently to each jump, the exact nature of which determines the tempered stable limit.   Any random walk in the domain of attraction of a stable, and subjected to this type of independent tempering, can be faithfully approximated by a tempered stable.  A few concrete examples are provided in \cite{meerschaert:roy:shao:2010}:  Precipitation data can be tempered due to atmospheric water content; measurements of hydraulic conductivity can be tempered by volume averaging; daily stock returns could be tempered by automatic trading limits.  See also \cite{aban:meerschaert:panorska:2006} for additional discussion.
\end{remark}

\section{Acknowledgement} The authors are grateful to an anonymous
referee for some comments that helped to improve the paper.


\end{document}